\documentclass[11pt,twoside,a4paper]{article}

\usepackage{amssymb}
\usepackage{amsmath}
\usepackage{amsthm}

\usepackage{mathrsfs}

\usepackage{amsfonts}
\usepackage{amssymb}
\usepackage{amsmath}
\usepackage{amsthm}
\usepackage{bbm}
\usepackage{verbatim}
\usepackage{url}

\allowdisplaybreaks

\pagestyle{myheadings}\markboth{Xuan Thinh Duong, Ji Li,  and Brett D. Wick}
{BMO space via  commutators in the two weight setting}

\textwidth =158mm
\textheight =230mm
\oddsidemargin 2mm
\evensidemargin 2mm
\headheight=13pt
\setlength{\topmargin}{-0.6cm}

\parindent=13pt

\def\C{{\mathbb C}}

\def\fz{\infty}

\def\ls{\lesssim}
\def\gs{\gtrsim}

\def\bmox{{{\rm BMO}_\nu(\C)}}

\newtheorem{thm}{Theorem}[section]
%[section]
%[section]
%[section]
%[section]
\newtheorem{defn}[thm]{Definition}%[section]
%[section]

\newcommand{\R}{\mathbb R}

\numberwithin{equation}{section}

\begin{document}

\arraycolsep=1pt

\title{\Large\bf  Two weight commutators
 for Beurling--Ahlfors operator}
\author{Xuan Thinh Duong,  Ji Li, and Brett D. Wick}

%\medskip
\date{}
\maketitle

\begin{center}
\begin{minipage}{13.5cm}\small

{\noindent  {\bf Abstract:}\ We establish the equivalent characterisation of the weighted BMO space on the complex plane $\C$ via the 
two weight commutator of the Beurling--Ahlfors operator with a BMO function.  Our method of proofs relies on the explicit kernel of the  Beurling--Ahlfors operator and the properties of Muckenhoupt weight class.
}

\end{minipage}
\end{center}

%\bigskip
%\tableofcontents
%
%
%\bigskip
\bigskip

{ {\it Keywords}: weighted BMO; commutator; two weights, Hardy space; factorization.}

\medskip

{{Mathematics Subject Classification 2010:} {42B30, 42B20, 42B35}}

\section{Introduction and statement of main result\label{s1}}

The theory of singular integrals in harmonic analysis has had its origin closely related to other fields of mathematics such as 
complex analysis and partial differential equations. A typical example is the Hilbert transform which has arisen from the
complex conjugates of harmonic functions in the real and complex parts of analytic functions. The higher dimensional version of
the Hilbert transform is the Riesz transform on the Euclidean space $\mathbb R^n$. The Hardy space $H^1$ and its dual space,
the BMO space (BMO is abbreviation for bounded mean oscillation) have played an important role for the end-point estimates
of singular integrals as they are known as the substitutes of the spaces $L^1$ and $L^{\infty}$.

For singular integrals, weighted estimates are  important and the $A_p$  class of Muckenhoupt weights has provided the appropriate class of weights for the study of Calder\'on-Zygmund operators. One can  use two weight estimates on commutators of BMO functions with certain 
singular integral operators to characterise BMO spaces. The recent paper
\cite{HLW} gives a notable result which characterised weighted BMO spaces through two weight estimates on commutators of BMO functions 
and the Riesz transform.
More specifically, consider the $j$-th Riesz transform on  $\mathbb R^n$ given by $R_j = \frac{\partial}{\partial x_j} \Delta^{-1/2}$,
the weights $\lambda_{1}, \lambda_{2}$ in the Muckenhoupt class $A_p$, $1 < p < \infty$,  and the weight $\nu = \lambda_{1}^{1/p} \lambda_{2}^{-1/p}$. Denote by  $L^p_{w}(\R^n)$ the weighted $L^p$ space with the measure $w(x)dx$
and $[b, R_i] (f)(x) = b(x) R_i (f)(x) - R_i (bf)(x)$  the commutator of the Riesz transform $R_i$ and the function $b \in {\rm BMO}_\nu(\R^n)$, i.e., the Muckenhoupt--Wheeden weighted BMO space (introduced in \cite{MW76}).
The main result in \cite{HLW}, Theorem 1.2,  says that
there exist constants $0 < c < C < \infty$ such that
\begin{equation}\label{HLW}
c\| b \| _{{\rm BMO}_\nu(\R^n)} \le \sum _{i=1}^{n} \| [b, R_i] : L^p_{\lambda_{1}}(\R^n) \rightarrow L^p_{\lambda_{2}}(\R^n) \|  \le C \| b \| _{{\rm BMO}_\nu(\R^n)}
\end{equation}
where the constants $c$ and $C$  depend only on $n, p, \lambda_{1}, \lambda_{2}$. 
%in which $[b, R_i] (f)(x) = b(x) R_i (f)(x) - R_i (bf)(x)$ denotes the commutator of the Riesz transform $R_i$ and the function $b \in {\rm BMO}_\nu(\R^n)$, i.e., the Muckenhoupt--Wheeden weighted BMO space (introduced in \cite{MW76}, see also the definition in Section 1.2 below).  
This result extended previous results of Bloom \cite{B}, Coifman, Rochberg and Weiss \cite{crw} and Nehari \cite{Ne}.

%provided a characterization of the boundedness of the commutator $[b, R_i]:L^p_{\lambda_{1}}(\R^n) \rightarrow L^p_{\lambda_{2}}(\R) $ in terms of a triple of information $b,\lambda_{1}$ and $\lambda_{2}$.  This result extended important work of Bloom in \cite{B} to handle Riesz transforms and more general Calder\'on-Zygmund operators.  It was additionally inspired by the foundational work of Coifman, Rochberg and Weiss \cite{crw}, where they characterized the boundedness of the commutator $[b,R_i]$ acting on unweighted Lebesgue spaces in terms of BMO; extending the work of Nehari \cite{Ne} about Hankel operators to higher dimensions.

While the upper bound of the two weight commutator can be obtained for a large class of singular integral operators, the lower bound
is delicate and its proof for each specific operator can be quite  different and depends on the nature of the operator. For example, 
the proof for the lower bound of the commutator with the Riesz transform used the spherical harmonic expansions for the Riesz kernels, 
which relies on the property of the Fourier transform of the Riesz  kernels. 

In this paper, we consider the Beurling-Ahlfors operator 
$\mathcal B$ (see for example \cite{PV,VN}) which plays a notable role in complex analysis
and  is given by convolution with the distributional kernel p.v. $\displaystyle1\over\displaystyle z^2$, i.e.,  for $x\in \C$,
\begin{align*}
\mathcal{B}(f)(x)&= {\rm p.v.} {1\over \pi} \int_{\C} {f(y) \over \big( x-y\big)^2} \ dy.
\end{align*}
Here, for simplicity, we just use $dy$ to denote Lebesgue measure on $\C$. For other
works on the Beurling--Ahlfors operator, 
see for example \cite{PV} where they established a sharp weighted estimate of $\mathcal B$,
which is sufficient to prove that any weakly quasiregular map is quasiregular.

We now recall the Muckenhoupt--Wheeden type weighted BMO space on $\C$. For $\nu\in A_2(\C)$, ${\rm BMO}_{\nu}(\C)$ is defined (see \cite{MW76}) as the set of all $f\in L^1_{loc}(\C)$, such that $$ \|f\|_{{\rm BMO}_{\nu}(\C)}:=\sup_Q {1\over \nu(Q)}\int_Q\big| f- f_Q  \big|dx <\infty, $$
where the supremum is taken over all cubes $Q\subset \C$ and 
$$ f_Q := {1\over |Q|} \int_Q f(y)dy. $$

A natural question is  as follows.
\begin{itemize}
\item[Q:]
 Can we establish the characterisation of two weight commutator and the related weighted BMO space for the Beurling--Ahlfors operator, i.e. obtain (\ref{HLW}) with the Beurling--Ahlfors operator in place of the Riesz transform?
\end{itemize}

Our following main result gives a positive answer to this question.
 
\begin{thm}\label{thm3}
Suppose $1<p<\infty$, $\lambda_1,\lambda_2\in A_p(\C)$ and $\nu= \lambda_1^{1\over p}\lambda_2^{-{1\over p}}$. Suppose $b\in L^1_{\rm loc}(\C)$. Let $\mathcal B$ be the Beurling--Ahlfors operator.  Then we have
\begin{equation}\label{equiv}
 \| b \| _{{\rm BMO}_\nu(\C)}\approx \| [b, \mathcal B] : L^p_{\lambda_1}(\C) \rightarrow L^p_{\lambda_2}(\C) \|.
\end{equation}
\end{thm}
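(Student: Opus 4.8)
The plan is to prove the two inequalities in \eqref{equiv} separately. For the upper bound $\| [b, \mathcal B] : L^p_{\lambda_1}(\C) \rightarrow L^p_{\lambda_2}(\C) \| \lesssim \| b \|_{{\rm BMO}_\nu(\C)}$, I would invoke the general two-weight (Bloom-type) upper bound for commutators of Calder\'on--Zygmund operators: since $\mathcal B$ is a standard convolution Calder\'on--Zygmund operator on $\C$ with a smooth Calder\'on--Zygmund kernel, and $\lambda_1,\lambda_2\in A_p$, $\nu=\lambda_1^{1/p}\lambda_2^{-1/p}$, the estimate $\|[b,\mathcal B]\colon L^p_{\lambda_1}\to L^p_{\lambda_2}\|\lesssim \|b\|_{{\rm BMO}_\nu}$ follows from the known Bloom-type theory (this is exactly the mechanism used for the Riesz transforms in \cite{HLW}). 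So the upper bound is essentially routine and I would state it with a reference rather than reprove it.

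The heart of the matter is the lower bound $\| b \|_{{\rm BMO}_\nu(\C)} \lesssim \| [b, \mathcal B] : L^p_{\lambda_1}(\C) \rightarrow L^p_{\lambda_2}(\C) \|$. Here the key structural fact I want to exploit is the explicit kernel $K(z)=\tfrac1\pi z^{-2}$. On a cube $Q=Q(x_0,r)\subset\C$, for $z$ near $x_0$ and $y$ far from $Q$ (say $y$ in a translate $\widetilde Q$ of $Q$ at a fixed large distance $\sim r$ from $Q$ in a well-chosen direction), the quantity $(z-y)^{-2}$ does not vanish and does not oscillate: one can write $(z-y)^{-2} = c_Q + E(z,y)$ where $c_Q$ is a nonzero constant of size $\sim r^{-2}$ (of a fixed argument, since $y-z$ stays in a fixed complex cone) and $E$ is a small error. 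More precisely, choosing $\widetilde Q$ so that the segment from $Q$ to $\widetilde Q$ avoids the zero/branch behaviour of $w\mapsto w^{-2}$, the kernel restricted to $Q\times \widetilde Q$ is, after multiplying by a unimodular constant, comparable to a positive constant, with a controllable first-order error. This is the analogue of the "median/non-degeneracy" step that in the Riesz case is handled by spherical harmonics.

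With this in hand I would run the standard lower-bound scheme: fix a cube $Q$ and let $g = \mathrm{sgn}(b - \langle b\rangle_{Q})\mathbf 1_{\widetilde Q}$ (or a suitably normalised variant). Testing the commutator against $f = g$ supported on $\widetilde Q$ and integrating the output over $Q$, the main term reproduces $\int_Q |b(z)-\langle b\rangle_{\widetilde Q}|\,dz$ up to the constant $c_Q\sim r^{-2}$, while the error term $\int_Q\int_{\widetilde Q} E(z,y)(b(z)-b(y))g(y)\,dy\,dz$ must be absorbed — either directly by its smallness, or by the usual trick of also subtracting the corresponding average, reducing it to a term controlled by $\|[b,\mathcal B]\|$ on a slightly larger configuration (an iteration/good-$\lambda$ or a telescoping over a chain of cubes). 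One then has
$$
\int_Q |b(z)-\langle b\rangle_Q|\,dz \;\lesssim\; r^{2}\,\big|\langle [b,\mathcal B] g,\, \mathbf 1_Q\rangle\big| \;+\;(\text{error}).
$$
Finally I would convert the $L^p_{\lambda_1}\to L^p_{\lambda_2}$ operator bound into the weighted normalisation: by H\"older with the weights, $|\langle [b,\mathcal B] g,\mathbf 1_Q\rangle|\le \|[b,\mathcal B]g\|_{L^p_{\lambda_2}}\,\|\mathbf 1_Q\|_{L^{p'}_{\lambda_2^{1-p'}}}\le \|[b,\mathcal B]\|\,\|g\|_{L^p_{\lambda_1}}\,\lambda_2(Q)^{\text{-}}\cdots$, and using $\|g\|_{L^p_{\lambda_1}}\le \lambda_1(\widetilde Q)^{1/p}$ together with the $A_p$ relations $\lambda_1(\widetilde Q)^{1/p}\lambda_2^{1-p'}(Q)^{1/p'}\lesssim |Q|^{-1}\,\nu(Q)\cdot|Q|^{?}$ — the precise bookkeeping here is the standard $A_p$ computation, using $\lambda_1(\widetilde Q)\approx\lambda_1(Q)$ and doubling — one arrives at $\frac{1}{\nu(Q)}\int_Q |b-\langle b\rangle_Q|\,dx\lesssim \|[b,\mathcal B]\|$, uniformly in $Q$, which is \eqref{equiv}.

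I expect the main obstacle to be the error-term analysis in the lower bound: unlike the scalar Hilbert/Riesz case where the median argument is clean, here one must carefully choose the location and orientation of $\widetilde Q$ relative to $Q$ so that $(z-y)^{-2}$ is uniformly bounded below in modulus and essentially constant in argument, and then show the first-order remainder $E(z,y)$ either is small enough to absorb or can be re-expressed (via adding back an average and using a telescoping chain of dyadic cubes) as something controlled by the commutator norm itself. Making that iteration close quantitatively — rather than just heuristically — is the delicate point.
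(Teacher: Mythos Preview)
Your upper bound is exactly what the paper does: quote the Bloom--type bound of \cite{HLW} for general Calder\'on--Zygmund operators. For the lower bound, however, your route and the paper's route diverge.

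You propose the separated--cubes (Uchiyama/Janson) scheme: pick $\widetilde Q$ at distance $\sim r$ from $Q$ so that $(z-y)^{-2}$ is essentially a nonzero constant on $Q\times\widetilde Q$, test the commutator on $\mathbf 1_{\widetilde Q}$ against a signed $\mathbf 1_Q$, and absorb the remainder $E(z,y)$. This does work, and the kernel non--degeneracy you describe (choose $\widetilde Q$ along the real axis so that $(z-y)^2$ stays near the positive reals) is correct; the subsequent $A_p$ bookkeeping via $\lambda_1(\widetilde Q)^{1/p}\lambda_2'(Q)^{1/p'}\lesssim \nu(Q)$ is exactly \eqref{Bloom weight2}. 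The honest cost is the one you flag yourself: you land on $\int_Q|b-\langle b\rangle_{\widetilde Q}|$ rather than $\int_Q|b-\langle b\rangle_{Q}|$, and closing that gap needs the median/sign trick or a chaining argument, plus a choice of separation constant large enough to absorb the first--order error. None of this is wrong, but it is work.

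The paper avoids all of it by an algebraic trick specific to the Beurling kernel: since the kernel is $\pi^{-1}(x-y)^{-2}$ with a \emph{polynomial} denominator, one stays on the single cube $Q$ (assumed centred at the origin) and simply writes
\[
\int_Q|b-b_Q|\,dx
=\frac{1}{|Q|}\int_\C\int_\C [b(x)-b(y)]\,\Gamma_Q(x)\,\frac{(x-y)^2}{(x-y)^2}\,\chi_Q(y)\,dy\,\chi_Q(x)\,dx,
\]
then expands $(x-y)^2=x^2-2xy+y^2$. Each of the three resulting pieces is literally $\int_\C [b,\mathcal B](p(y)\chi_Q)(x)\,q(x)\Gamma_Q(x)\chi_Q(x)\,dx$ for monomials $p,q$ of total degree $2$; since $|x|,|y|\le C|Q|^{1/2}$ on $Q$, the monomials contribute exactly a factor $|Q|$, and H\"older plus \eqref{Bloom weight2} give $\lambda_1(Q)^{1/p}\lambda_2'(Q)^{1/p'}\lesssim\nu(Q)$ with no error term, no second cube, and no iteration.

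In short: your approach is the standard, robust one that would apply to any Calder\'on--Zygmund operator whose kernel is non--degenerate in the sense you describe; the paper's approach is shorter and cleaner but relies on the special fact that $K(x,y)^{-1}$ is a polynomial in $x$ and $y$, which is peculiar to operators with kernels $c\,(x-y)^{-k}$.
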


We provide the proof of the above theorem in Section 2. Then in Section 3, we provide the application of Theorem \ref{thm3}: the weak factorisation of 
the weighted Hardy space via the bilinear form in terms of the  Beurling--Ahlfors operator, which extends the classical result of Coifman, Rochberg and Weiss \cite{crw}.

\medskip
Throughout the paper,
we denote by $C$ and $\widetilde{C}$ {\it positive constants} which
are independent of the main parameters, but they may vary from line to
line. For every $p\in(1, \fz)$, we denote by $p'$ the conjugate of $p$, i.e., $\frac{1}{p'}+\frac{1}{p}=1$.  If $f\le Cg$, we then write $f\ls g$ or $g\gs f$;
and if $f \ls g\ls f$, we  write $f\approx g.$

%\section{Lower bound of commutaor $\left\|[b, T] \right\|_{L^{p}_{\lambda_{1}}(X)\to L^{p}_{\lambda_{2}}(X)}$}\label{s3}

%\subsection{Bloom Hardy spaces, definitions, characterizations and duality}\label{s3.1}

%In this section we recall the Hardy and BMO spaces, and some important properties of   and  Riesz transforms
%related to the Bessel operator $\Delta_\lambda$ and $S_\lambda$  from \cite{ms,bdt,bfbmt,bfs}.

\section{Proof of the main theorem}\label{s3}

We first recall the definition and some basic properties of the Muckenhoupt $A_p(\C)$ weights.

\begin{defn}
Suppose $w\in L^1_{loc}(\C)$, $w\geq0$, and $1<p<\infty$. We say that $w$ is a  Muckenhoupt $A_p(\C)$ weight
if there exists a constant $C$ such that
\begin{align} \label{Ap}
\sup_{Q} \left\langle w\right\rangle_Q   \left\langle w^{-{1\over p-1}}\right\rangle_Q^{p-1}\leq C<\infty,
\end{align}
where the supremum is taken over all cubes $Q$ in $\C$.
We denote by $[w]_{A_p}$ the smallest constant $C$ such that \eqref{Ap} holds.

The class $A_1(\C)$ consists of the weights $w$ satisfying for some $C > 0$ that
$$\left\langle w\right\rangle_Q \leq C {\rm ess}\inf_{x\in Q} w(x)$$
 for any cubes $Q\subset \C$.
We denote by $[w]_{A_1}$ the smallest constant $C$ such that the above inequality holds.
\end{defn}

If $w\in A_p(\C)$ with $p>1$, then the ``conjugate'' weight
\begin{align}\label{conjugate weight}
w'= w^{1-p'} \in A^{p'}(\C)
\end{align}
 with $[w']_{A^{p'}}= [w]_{A_p}^{p'-1}$, where $p'$ is the conjugate index of $p$, i.e., $1/p +1/p' =1$.
Moreover, suppose $\lambda_1,\lambda_2\in A_p(\C)$ with $1<p<\infty$.  Set
\begin{align}\label{Bloom weight}
\nu= \lambda_1^{1\over p}\lambda_2^{-{1\over p}}.
\end{align}
Then we have that $\nu\in A_2(\C)$, see \cite[Lemma 2.19]{HLW}. Moreover, we have the following
fundamental result (see \cite[equation (2.21)]{HLW}): for any ball $B \subset \C$,
\begin{align}\label{Bloom weight2}
\Big({\lambda_1(B)\over |B|}\Big)^{1\over p} \Big({\lambda'_2(B)\over |B|}\Big)^{{1\over p'}}\ls
{1\over  \Big({\lambda'_1(B)\over |B|}\Big)^{1\over p'} \Big({\lambda_2(B)\over |B|}\Big)^{{1\over p}}}
\ls {1\over   {\nu^{-1}(B)\over |B|} } \ls {\nu(B)\over |B|}.
\end{align}

Suppose $1<p<\infty$, $\lambda_{1},\lambda_{2}\in A_p(\C)$ and $\nu= \lambda_{1}^{1\over p}\lambda_{2}^{-{1\over p}}$. Note that $\nu\in A_2(\C)$.
Since $\mathcal B$ is a Calder\'on--Zygmund operator, following the result in \cite{HLW} we obtain that there exists a positive constant $C$ such that for
$b\in {\rm BMO}_\nu(\C)$, 
\begin{equation}\label{upper}
\| [b, \mathcal B] : L^p_{\lambda_1}(\C) \rightarrow L^p_{\lambda_2}(\C) \|\leq C  \| b \| _{{\rm BMO}_\nu(\C)}.
\end{equation}
When $\lambda_1=\lambda_2$, then the upper bound with precise information about the constant $C$ as a function of the $A_p$ characteristic was obtained in \cite{Chu}.

We now prove the lower bound. Suppose that $b\in L^1_{\rm loc}(\C)$ and that $[b, \mathcal B]$ is bounded from
$ L^p_{\lambda_{1}}(\C) $ to $ L^p_{\lambda_{2}}(\C)$. It suffices to show that for every cube $Q\subset \C$, there exists a positive constant $C$ such that 
\begin{align*}
{1\over \nu(Q)}\int_Q |b(x) - b_Q| dx \leq C <\infty.
\end{align*}

To see this,
without lost of generality, we now consider an arbitrary  cube $Q\subset \C$ centered at the origin.  Then we have 
\begin{align*}
&\int_Q |b(x) - b_Q| dx\\
&=\int_\C [b(x) - b_Q] {\rm\, sgn}(b(x)-b_Q) \chi_Q(x)dx\\
&={1\over |Q|} \int_\C\int_\C [b(x) - b(y)] {\rm\, sgn}(b(x)-b_Q)\chi_Q(y)dy\, \chi_Q(x)dx\\
&={1\over |Q|} \int_\C\int_\C [b(x) - b(y)] {\rm\, sgn}(b(x)-b_Q) {  (x-y)^2\over (x-y)^2 } \chi_Q(y)dy\, \chi_Q(x)dx\\
&=: I_1+I_2+I_3.
\end{align*}
where 
\begin{align*}
I_1&:={1\over |Q|} \int_\C\int_\C [b(x) - b(y)] {\rm\, sgn}(b(x)-b_Q) {  x^2\over (x-y)^2 } \chi_Q(y)dy\, \chi_Q(x)dx\\
I_2&:={1\over |Q|} \int_\C\int_\C [b(x) - b(y)] {\rm\, sgn}(b(x)-b_Q) {  -2xy\over (x-y)^2 } \chi_Q(y)dy\, \chi_Q(x)dx\\
I_3&:={1\over |Q|} \int_\C\int_\C [b(x) - b(y)] {\rm\, sgn}(b(x)-b_Q) {  y^2\over (x-y)^2 } \chi_Q(y)dy\, \chi_Q(x)dx.
\end{align*}

We now denote  $\Gamma_Q(x):= {\rm\, sgn}(b(x)-b_Q)$. Then for the term $I_1$, we obtain that
\begin{align*}
|I_1|&={1\over |Q|} \bigg|\int_\C [b,\mathcal B] (\chi_Q)(x) \, x^2  \Gamma_Q(x)\chi_Q(x)dx\bigg|\\
&\leq {1\over |Q|} \Big\| [b,\mathcal B] (\chi_Q)\Big\|_{L^p_{\lambda_2}(\C)}   \Big\| x^2  \Gamma_Q(x)\chi_Q(x) \Big\|_{L^{p'}_{\lambda'_2}(\C)}\\
&\leq C\| [b, \mathcal B] : L^p_{\lambda_1}(\C) \rightarrow L^p_{\lambda_2}(\C) \| \Big\| \chi_Q\Big\|_{L^p_{\lambda_1}(\C)}\Big\| \chi_Q \Big\|_{L^{p'}_{\lambda'_2}(\C)}\\
&\leq C\| [b, \mathcal B] : L^p_{\lambda_1}(\C) \rightarrow L^p_{\lambda_2}(\C) \| \lambda_1(Q)^{1\over p} \lambda'_2(Q)^{1\over p'}\\
&\leq C\| [b, \mathcal B] : L^p_{\lambda_1}(\C) \rightarrow L^p_{\lambda_2}(\C) \|\nu(Q),
\end{align*}
where in the first inequality we use Holder's inequality with the index ${1\over p}+{1\over p'}=1$, in the second inequality we use the boundedness of $[b, \mathcal B]$  from
$ L^p_{\lambda_{1}}(\C) $ to $ L^p_{\lambda_{2}}(\C)$ and the fact that $|\Gamma_Q(x)|\leq 1$ for any $x\in \C$, and in the last inequality we use the fundamental fact in 
\eqref{Bloom weight2}.

As for the term $I_2$, similarly, we have
\begin{align*}
|I_2|&:={2\over |Q|} \bigg| \int_\C\int_\C [b,\mathcal B] \big( y\chi_Q (y)\big) (x)\ x \Gamma_Q(x) \chi_Q(x)dx \bigg|\\
&\leq {1\over |Q|} \Big\| [b,\mathcal B] \big(y\chi_Q(y)\big)\Big\|_{L^p_{\lambda_2}(\C)}   \Big\| x  \Gamma_Q(x)\chi_Q(x) \Big\|_{L^{p'}_{\lambda'_2}(\C)}\\
&\leq C\| [b, \mathcal B] : L^p_{\lambda_1}(\C) \rightarrow L^p_{\lambda_2}(\C) \|{1\over |Q|} \Big\| y \chi_Q(y)\Big\|_{L^p_{\lambda_1}(\C)}\Big\| x \chi_Q(x) \Big\|_{L^{p'}_{\lambda'_2}(\C)}\\
&\leq C\| [b, \mathcal B] : L^p_{\lambda_1}(\C) \rightarrow L^p_{\lambda_2}(\C) \| \lambda_1(Q)^{1\over p} \lambda'_2(Q)^{1\over p'}\\
&\leq C\| [b, \mathcal B] : L^p_{\lambda_1}(\C) \rightarrow L^p_{\lambda_2}(\C) \|\nu(Q).
\end{align*}
Again, for the term $I_3$, using similar argument, we get that
\begin{align*}
|I_3|&:={1\over |Q|} \bigg| \int_\C\int_\C [b,\mathcal B] \big( y^2\chi_Q (y)\big) (x)\  \Gamma_Q(x) \chi_Q(x)dx \bigg|\\
&\leq {1\over |Q|} \Big\| [b,\mathcal B] \big(y^2\chi_Q(y)\big)\Big\|_{L^p_{\lambda_2}(\C)}   \Big\|   \Gamma_Q(x)\chi_Q(x) \Big\|_{L^{p'}_{\lambda'_2}(\C)}\\
&\leq C\| [b, \mathcal B] : L^p_{\lambda_1}(\C) \rightarrow L^p_{\lambda_2}(\C) \|{1\over |Q|} \Big\| y^2 \chi_Q(y)\Big\|_{L^p_{\lambda_1}(\C)}\Big\|  \chi_Q(x) \Big\|_{L^{p'}_{\lambda'_2}(\C)}\\
&\leq C\| [b, \mathcal B] : L^p_{\lambda_1}(\C) \rightarrow L^p_{\lambda_2}(\C) \| \lambda_1(Q)^{1\over p} \lambda'_2(Q)^{1\over p'}\\
&\leq C\| [b, \mathcal B] : L^p_{\lambda_1}(\C) \rightarrow L^p_{\lambda_2}(\C) \|\nu(Q).
\end{align*}

As a consequence, combining the estimates of $I_1$, $I_2$ and $I_3$, we get that
\begin{align*}
\int_Q |b(x) - b_Q| dx\leq C\| [b, \mathcal B] : L^p_{\lambda_1}(\C) \rightarrow L^p_{\lambda_2}(\C) \| \nu(Q),
\end{align*}
which implies that
$$ \|b\|_{{\rm BMO}_\nu(\C)} \leq C\| [b, \mathcal B] : L^p_{\lambda_1}(\C) \rightarrow L^p_{\lambda_2}(\C) \|. $$

\section{Applications: Weak factorization of the weighted Hardy space}\label{s3.1}

We recall the weighted Hardy space, then prove that it is the predual of $\bmox$.
Note that based on the results in \cite{MW76} and the recent result in \cite{DHLWY}, 
there are also other equivalent characterisations of the weighted Hardy space, for example, via Littlewood--Paley area functions, maximal functions, etcetera.  For simplicity, we just define the weighted Hardy space via atoms as follows.

\begin{defn}\label{d-hardy -2}
Suppose $\nu \in A_2(\C)$.
A function $a\in L^2(\C)$ is called an $\nu$-weighted $(1,2)$-atom if it satisfies
\begin{enumerate}
\item[(1)] {\rm supp}  $a\subset B$, where $B$ is a ball in $\C$;
\item[(2)] $\int_\C a(x)dx=0$;
\item[(3)] $ \|a\|_{L^2_\nu(\C)}\leq \nu(B)^{-{1\over 2}} $.
\end{enumerate}
We say that $f$
belongs to
the weighted Hardy space $H^{1}_\nu(\C)$ if
$f$ can be written as
\begin{equation}\label{d atom 2}
f=\sum_j \alpha_j a_j
\end{equation}
with $\sum_j|\alpha_j|<\infty$. The $H^{1}_\nu(\C)$ norm of $f$ is defined as
$$ \|f\|_{H^{1}_\nu(X)}:=\inf\Big\{\sum_j|\alpha_j|:\, f {\rm\ \ has\ the \ representation\ as\ in\ } \eqref{d atom 2}  \Big\}. $$
\end{defn}

We also recall the John--Nirenberg inequality for the ${\rm BMO}_\nu(\C)$. According to \cite{DHLWY}, we know that for $v\in A_2(\C)$ and for $1\leq r\leq 2$,
\begin{align}
\|b\|_{{\rm BMO}_{\nu}(\C)} \leq \|b\|_{{\rm BMO}_{\nu,r}(\C)} \leq C_{n,p,r} [\nu]_{A_2}\|b\|_{{\rm BMO}_{\nu}(\C)},
\end{align}
where
\begin{align}
\|b\|_{{\rm BMO}_{\nu,r}(\C)}:=
\bigg(\sup_Q\frac1{\nu(Q)}\int_Q\left|b(x)-b_Q\right|^r\, \nu^{1-r}(x)dx\bigg)^{1\over r}.
\end{align}

\begin{thm}\label{thm dual}
Suppose $\nu \in A_2(\C)$. The dual of $H^{1}_\nu(\C)$ is ${\rm BMO}_\nu(\C)$.
\end{thm}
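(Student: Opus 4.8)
The plan is to prove the duality $\big(H^1_\nu(\C)\big)^\ast = {\rm BMO}_\nu(\C)$ in the two standard directions, adapting the classical Fefferman--Stein argument to the weighted setting, where the pairing is the unweighted integral $\langle f, b\rangle = \int_\C f(x)\,b(x)\,dx$ and the John--Nirenberg inequality recalled above is the technical engine.

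\textbf{Step 1: BMO functions give bounded functionals.} First I would fix $b\in{\rm BMO}_\nu(\C)$ and show $\ell_b(f):=\int_\C f\,b\,dx$ extends to a bounded linear functional on $H^1_\nu(\C)$. By linearity and the atomic decomposition it suffices to bound $|\ell_b(a)|$ uniformly over $\nu$-weighted $(1,2)$-atoms $a$ with $\supp a\subset B$. Using the cancellation $\int a=0$, write $\ell_b(a)=\int_B a(x)\,(b(x)-b_B)\,dx$. Apply H\"older with exponents $2$ and $2$ but splitting the weight: $|\ell_b(a)|\le \|a\|_{L^2_\nu(B)}\,\big(\int_B |b(x)-b_B|^2\,\nu^{-1}(x)\,dx\big)^{1/2}$. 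The first factor is $\le \nu(B)^{-1/2}$ by the size condition on the atom; the second factor is $\le \nu(B)^{1/2}\,\|b\|_{{\rm BMO}_{\nu,2}(\C)}$ directly from the definition of $\|\cdot\|_{{\rm BMO}_{\nu,2}(\C)}$ with $r=2$ (note $\nu\in A_2(\C)$, so $r=2$ is admissible). Hence $|\ell_b(a)|\le \|b\|_{{\rm BMO}_{\nu,2}(\C)}\ls [\nu]_{A_2}\|b\|_{{\rm BMO}_\nu(\C)}$ by the John--Nirenberg inequality, uniformly in $a$, and therefore $\|\ell_b\|_{(H^1_\nu)^\ast}\ls [\nu]_{A_2}\|b\|_{{\rm BMO}_\nu(\C)}$.

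\textbf{Step 2: every bounded functional comes from a BMO function.} Conversely, let $\ell\in\big(H^1_\nu(\C)\big)^\ast$. For each ball $B$, consider the subspace $L^2_{\nu,0}(B)$ of $L^2_\nu$-functions supported in $B$ with mean zero; by the atomic size normalisation, any $g\in L^2_{\nu,0}(B)$ satisfies $g/(\nu(B)^{1/2}\|g\|_{L^2_\nu})$ is (a multiple of) an atom, so $g\in H^1_\nu(\C)$ with $\|g\|_{H^1_\nu}\le \nu(B)^{1/2}\|g\|_{L^2_\nu(\C)}$. Thus $\ell$ restricts to a bounded functional on $L^2_{\nu,0}(B)$, and by the Riesz representation theorem in the Hilbert space $L^2_\nu(B)$ there is $b_B\in L^2_\nu(B)$ with $\ell(g)=\int_B g\,b_B\,dx$ for all $g\in L^2_{\nu,0}(B)$; replacing $b_B$ by $b_B$ minus its (unweighted) average over $B$ does not change the action on mean-zero $g$, so we may normalise. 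A standard consistency check on nested balls shows these $b_B$ patch together (up to constants) to a single $b\in L^1_{\rm loc}(\C)$ with $\ell(f)=\int f\,b\,dx$ on a dense subspace. Finally, to see $b\in{\rm BMO}_\nu(\C)$: fix $B$, let $\Gamma_B:=\mathrm{sgn}(b-b_B)$, and test $\ell$ against $g:=(\Gamma_B - \langle\Gamma_B\rangle_B)\chi_B$, which lies in $L^2_{\nu,0}(B)$; then $\int_B|b-b_B|\,dx = \int_B (b-b_B)\Gamma_B\,dx = \int_B(b-b_B)g\,dx = \ell(g) \le \|\ell\|\,\|g\|_{H^1_\nu} \le \|\ell\|\,\nu(B)^{1/2}\|g\|_{L^2_\nu(\C)} \le \|\ell\|\,\nu(B)^{1/2}\big(\int_B \nu\,dx\big)^{1/2} = \|\ell\|\,\nu(B)$, using $|g|\le 2$ crudely (or a sharper bound; the constant is immaterial). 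Dividing by $\nu(B)$ and taking the supremum over balls gives $\|b\|_{{\rm BMO}_\nu(\C)}\le C\|\ell\|$.

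\textbf{Main obstacle.} The routine parts are the H\"older estimates; the genuinely delicate point is Step 2's patching argument --- making precise that the locally defined representatives $b_B$ glue to a \emph{global} $L^1_{\rm loc}$ function and that $\ell(f)=\int f b$ on all of a dense class (finite linear combinations of atoms), not merely on each $L^2_{\nu,0}(B)$ separately. One must check that on $B\subset B'$ the two representatives $b_B$ and $b_{B'}$ differ by an additive constant a.e.\ on $B$ (because their difference is $L^2_\nu$-orthogonal to all mean-zero functions supported in $B$, hence constant), then use an exhaustion $B_k\uparrow\C$ with a fixed base point to define $b$ unambiguously; a short density/approximation argument then transfers the identity $\ell(f)=\int f b$ from atoms to their finite combinations, which are dense in $H^1_\nu(\C)$ by definition. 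I would also remark that, as in the unweighted case, uniqueness of $b$ is only modulo constants, which is exactly the ambiguity invisible to both $\|\cdot\|_{{\rm BMO}_\nu(\C)}$ and the pairing against mean-zero atoms. This completes the identification $\big(H^1_\nu(\C)\big)^\ast={\rm BMO}_\nu(\C)$ with equivalent norms.
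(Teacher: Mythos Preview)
Your proof is correct and follows essentially the same Coifman--Weiss scheme as the paper: Step~1 is identical (H\"older against $\nu$ and $\nu^{-1}$, atom size, John--Nirenberg with $r=2$), and Step~2 likewise uses Riesz representation on the mean-zero $L^2_\nu$ subspace over each cube/ball followed by the same nested-patching argument. The only cosmetic difference is in the final ${\rm BMO}_\nu$ estimate: the paper bounds the $L^2_{\nu^{-1}}$ oscillation $\big(\int_Q|\varphi-\varphi_Q|^2\nu^{-1}\big)^{1/2}$ by a duality/supremum computation, whereas you test directly against the sign function $g=(\Gamma_B-\langle\Gamma_B\rangle_B)\chi_B$ to control $\int_B|b-b_B|$; both are standard and equivalent (mind the notational clash where you use $b_B$ first for the Riesz representative and then for the average).
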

\begin{proof}
This duality result follows from a standard argument, see for example \cite{cw77}.  
By completeness, we provide the proof as follows.
We first show that  $${\rm BMO}_{\nu}(\C)\subset \big(H^{1}_{\nu}(\C)\big)^\ast.$$
In fact, for any $g\in {\rm BMO}_{\nu}(\C)$,
define
$$L_g(a):=\int_{\C} a(x)g(x)dx,$$
where  $a$ is an $\nu$-weighted $(1,2)$-atom.

Assume that $a$ is supported in a cube $Q$.
Then from H\"older's inequality and $\nu\in A_2(\C)$, we see that
\begin{align*}
\left|\int_{\C} a(x)g(x)\,dx\right|
&=\left|\int_Q a(x)[g(x)-g_Q]\,dx\right|\\
&\le \left(\int_Q |g(x)-g_Q|^2\nu^{-1}(x)\,dx\right)^{\frac12}\left(\int_Q |a(x)|^2\nu(x)\,dx\right)^{\frac12}\\
&\le \left[\frac1{\nu(B)}\int_Q |g(x)-g_Q|^2\nu^{-1}(x)\,dx\right]^{\frac12}\\
&\le C\|g\|_{{\rm BMO}_{\nu}(\C)}.
\end{align*}
Thus $L_g$ extends to a bounded linear functional on $H^{1}_{\nu}(\C)$.

Conversely, assume that $L\in \big(H^{1}_{\nu}(\C)\big)^\ast$.
For any cube $Q$, let
$$L^2_{0,\,\nu}(Q)=\{f\in L^2_{\nu}(Q): {\rm supp}(f)\subset Q,\,\,\int_Q f(x)\,dx=0\}.$$
Then we see that for any $f\in L^2_{0,\,\nu}(Q)$, the function $\frac1{\nu(Q)^\frac12\|f\|_{L^2_{\nu}(Q)}}f$
is an $H^{1}_{\nu}(\C)$-atom.
This implies that
$$|L(a)|\le \|L\|\|a\|_{H^{1}_{\nu}(\C)}\le \|L\|.$$
Moreover, we see that
$$|L(f)|\le \|\ell\|\nu(Q)^\frac12\|f\|_{L^2_{\nu}(Q)}.$$
From the Riesz representation theorem, there exists $[\varphi]\in [L^2_{0,\,\nu}(Q)]^\ast=L^2_{0,\,\nu^{-1}}(Q)/\mathbb C$,
and $\varphi\in [\varphi]$, such that for any $f\in L^2_{0,\,\nu}(Q)$,
$$L(f)=\int_Q f(x)\varphi(x)dx$$
and
$$\|[\varphi]\|=\inf_c\|\phi+c\|_{L^2_{\nu^{-1}}(Q)}\le \|L\|\nu(Q)^\frac12.$$

Let $Q$ fixed and $Q_j=2^jQ$, $j\in\mathbb N$. Then we have that for all $f\in L^2_{0,\,\nu}(Q)$
and $j\in\mathbb N$,
$$\int_Q f(x)\varphi_{Q}(x)\,dx=\int_Q f(x)\varphi_{Q_j}(x)\,dx.$$
It follows that for almost every $x\in Q$, $\varphi_{Q_j}(x)-\varphi_{Q_0}(x)=C_j$
for some constant $C_j$. From this we further deduce that for all $j,\,l\in\mathbb N$,
$j\le l$ and almost every $x\in Q_j$,
$$\varphi_{Q_j}(x)-C_j=\varphi_{Q_0}(x)=\varphi_{Q_l}(x)-C_l.$$
Define
$$\varphi(x)=\varphi_j(x)-C_j$$
on $B_j$ for $j\in\mathbb N$. Thus, $\varphi$ is well defined. Moreover,
since $\C=\cup_j Q_j$,
by H\"older's inequality and $\nu\in A_2(\C)$,
we see that for any $c$ and any cube $Q$,
\begin{align*}
&\left[\int_Q|\varphi(x)-\varphi_Q|^2\nu^{-1}(x)\,dx\right]^{\frac12}\\
&\quad=
\sup_{\|f\|_{L^2_{\nu}(Q)}\le1}\left|\langle f, \varphi-\varphi_Q\rangle\right|\\
&\quad=
\sup_{\|f\|_{L^2_{\nu}(Q)}\le1}\left|\int_Q f(x)[\varphi(x)-\varphi_Q]\,dx\right|\\
&\quad=\sup_{\|f\|_{L^2_{\nu}(Q)}\le1}\left|\int_Q [f(x)-f_Q][\varphi(x)+c]\,dx\right|\\
&\quad\le\sup_{\|f\|_{L^2_{\nu}(Q)}\le1}\left[\|f\|_{L^2_{\nu}(Q)}
+|f_Q|\nu(Q)^\frac12\right]\|[\varphi(x)+c]\chi_Q\|_{L^2_{\nu^{-1}}(Q)}\\
&\quad\le \|[\varphi(x)+c]\chi_Q\|_{L^2_{\nu^{-1}}(Q)}.
%&\quad\le C \|\ell\|[w^{-1}_{+,\,e}(Q)]^\frac12.
\end{align*}
Taking the infimum over $c$, we have that
$\varphi\in {\rm BMO}_{\nu}(\C)$ and
$\|\varphi\|_{{\rm BMO}_{\nu}(\C)}\le C \|L\|$.
\end{proof}

%Then we have the definition via weighted square functions.
%\begin{defn}\label{def:discrete_square_function}
%    For $f$ in $L^2(X)$, the \emph{discrete
%    Littlewood--Paley square function $S(f)$ of $f$} is defined
%    by
%    \begin{eqnarray}
%        S(f)(x)
%        := \Big\{ \sum_{k}\sum_{\alpha\in\mathscr{Y}^k} \big|
%            \langle \psi_{\alpha}^{k},f \rangle
%            \widetilde{\chi}_{{Q}_{\alpha}^{k}}(x)
%            \big|^2 \Big\}^{1/2},
%    \end{eqnarray}
%    where $\widetilde{\chi}_{{ Q}_{\alpha}^{k}}(x) := \chi_{{
%    Q}_{\alpha}^{k}}(x) \nu({ Q}_{\alpha}^{k})^{-1/2}$ and $\chi_{{
%    Q}_{\alpha}^{k}}(x)$ is the indicator function of the dyadic
%    cube~${ Q}_{\alpha}^{k}$.
%
%    We define
%    $$ H^1_\nu(X):=\{ f\in L^1(X):\ S(f)\in L^{1}_{\nu}(X) \} $$
%with the norm
%    $$ \|f\|_{H^1_\nu(X)}:=\|S(f)\|_{L^{1}_{\nu}(X)}. $$
%
%\end{defn}

%\subsection{Weak factorization of weighted Hardy space $H^{1,\infty}_\nu(X)$}\label{s3.2}

The main result of this section is as follows.
\begin{thm}\label{weakfactorization}
Suppose $1<p<\infty$, $\lambda_1,\lambda_2\in A_p(\C)$ and $\nu= \lambda_1^{1\over p}\lambda_2^{-{1\over p}}$. For every $f\in H^{1}_\nu(\C)$,
there exist sequences $\{\alpha_j^k\}_j \in\ell^1$ and functions $h_j^k\in L^p_{\lambda_1}(\C)$, $g_j^k \in L^{p'}_{\lambda'_2}(\C)$ with $p'={p\over p-1}$ and $\lambda'_2=\lambda_2^{-{1\over p-1}}$ such that
\begin{align}
\label{repf}
f(x)= \sum_{k=1}^\infty  \sum_{j=1}^\infty \alpha_j^k \Pi(g_j^k,h_j^k)(x)
\end{align}
in the sense of $H^{1}_\nu(\C)$,
where $\Pi(g_j^k,h_j^k)(x)$ is the bilinear form defined as
$$\Pi(g_j^k,h_j^k)(x) :=h_j^k(x) \mathcal B (g_j^k)(x)- g_j^k(x) \mathcal B ^*(h_j^k)(x) .
$$
Moreover, we have that
$$\inf\left\{ \sum_{k=1}^\infty\sum_{j=1}^\infty  |\alpha_j^k | \|g_j^i\|_{L^{p'}_{\lambda'_2}(\C)}\|h_j^i\|_{L^p_{\lambda_1}(\C)}\right\}
\approx  \|f\|_{H^{1}_\nu(\C)},$$
where the infimum is taken over all possible representations of $f$ from \eqref{repf}.
\end{thm}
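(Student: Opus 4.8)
The plan is to prove this weak factorisation theorem by combining the two standard ingredients: the duality $\big(H^1_\nu(\C)\big)^\ast = {\rm BMO}_\nu(\C)$ established in Theorem~\ref{thm dual}, and the lower bound $\|b\|_{{\rm BMO}_\nu(\C)} \lesssim \|[b,\mathcal B]:L^p_{\lambda_1}(\C)\to L^p_{\lambda_2}(\C)\|$ from Theorem~\ref{thm3}. First I would observe that one direction is elementary: for any $g\in L^{p'}_{\lambda'_2}(\C)$ and $h\in L^p_{\lambda_1}(\C)$, the bilinear form $\Pi(g,h)$ lies in $H^1_\nu(\C)$ with $\|\Pi(g,h)\|_{H^1_\nu(\C)}\lesssim \|g\|_{L^{p'}_{\lambda'_2}(\C)}\|h\|_{L^p_{\lambda_1}(\C)}$. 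This is seen by pairing against $b\in{\rm BMO}_\nu(\C)$: since $\langle \Pi(g,h),b\rangle = \langle h\mathcal B(g) - g\mathcal B^\ast(h), b\rangle = \langle [b,\mathcal B](g), h\rangle$ (moving $\mathcal B^\ast$ back to $\mathcal B$ and regrouping), Hölder's inequality with exponents $p,p'$ and the weights $\lambda_2,\lambda'_2$ together with the upper bound \eqref{upper} gives $|\langle \Pi(g,h),b\rangle| \lesssim \|[b,\mathcal B]\|\,\|g\|_{L^{p'}_{\lambda'_2}}\|h\|_{L^p_{\lambda_1}} \lesssim \|b\|_{{\rm BMO}_\nu}\|g\|_{L^{p'}_{\lambda'_2}}\|h\|_{L^p_{\lambda_1}}$. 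By the duality theorem this bounds the $H^1_\nu$-norm, and summing over a convergent series shows that any $f$ with a representation \eqref{repf} satisfies $\|f\|_{H^1_\nu(\C)} \lesssim \inf\sum_k\sum_j|\alpha^k_j|\|g^k_j\|_{L^{p'}_{\lambda'_2}}\|h^k_j\|_{L^p_{\lambda_1}}$.

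The substantive direction is that \emph{every} $f\in H^1_\nu(\C)$ admits such a factorisation with comparable norm. The standard approach, going back to Coifman--Rochberg--Weiss, is an iteration/approximation scheme. I would first establish an approximation lemma: given $f\in H^1_\nu(\C)$ (it suffices to treat a single $\nu$-weighted $(1,2)$-atom, or a finite linear combination, by density) and $\epsilon\in(0,1)$, there exist finitely many $g_j\in L^{p'}_{\lambda'_2}(\C)$, $h_j\in L^p_{\lambda_1}(\C)$ and scalars $\alpha_j$ with $\sum_j|\alpha_j|\,\|g_j\|_{L^{p'}_{\lambda'_2}}\|h_j\|_{L^p_{\lambda_1}} \lesssim \|f\|_{H^1_\nu(\C)}$ such that $\big\|f - \sum_j\alpha_j\Pi(g_j,h_j)\big\|_{H^1_\nu(\C)} \le \epsilon\|f\|_{H^1_\nu(\C)}$. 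Granting this lemma, one runs the usual telescoping argument: set $f_0 = f$, choose a partial factorisation with error $f_1$ of norm $\le \epsilon\|f_0\|$, then factor $f_1$ up to error $f_2$ with $\|f_2\|\le\epsilon\|f_1\|\le\epsilon^2\|f_0\|$, and so on; the accumulated factorisation terms, reindexed as a double sequence $\{\alpha^k_j\}$, converge absolutely because $\sum_k\epsilon^k < \infty$, and $f = \sum_k\sum_j\alpha^k_j\Pi(g^k_j,h^k_j)$ in $H^1_\nu(\C)$ with the required norm control.

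The heart of the matter — and the step I expect to be the main obstacle — is the approximation lemma, which is precisely where the structure of the Beurling--Ahlfors kernel and the two-weight machinery must be used. The idea is: to approximate a single atom $a$ supported on a ball $B$ with $\int a = 0$, one wants to pick $h = c\,\chi_B\cdot(\text{a normalisation})$ and then choose $g$ so that $\Pi(g,h) = h\mathcal B(g) - g\mathcal B^\ast(h)$ reproduces $a$ up to a small, controllable tail. Concretely, choosing $g$ supported near $B$ and exploiting that for $x,y\in B$ the kernel $1/(x-y)^2$ is, after the algebraic identity $(x-y)^2 = x^2 - 2xy + y^2$ used in Section~\ref{s3}, close to a constant on a suitable dilate, one arranges $g\mathcal B^\ast(h)$ to be the dominant term producing $a$, while $h\mathcal B(g)$ and the off-diagonal contributions have $H^1_\nu$-norm at most $\epsilon$ times the atom's norm after translating $g$ far from $B$ (so that the CZ kernel decay kicks in) — this is the two-weight analogue of the displacement trick in \cite{crw}. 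The bookkeeping of the weighted norms here relies on \eqref{Bloom weight2} exactly as in the proof of the lower bound for the $I_1,I_2,I_3$ terms, and on the fact that $\mathcal B$ and $\mathcal B^\ast$ are Calderón--Zygmund operators bounded on both $L^p_{\lambda_1}$ and $L^{p'}_{\lambda'_2}$. Making the tail genuinely small uniformly, and verifying that the constructed $g,h$ have the claimed weighted integrability with the norm product controlled by $\nu(B)^{-1}\cdot\nu(B) \approx 1 \approx \|a\|_{H^1_\nu}$, is the technical crux; everything else is the routine iteration and the soft duality argument.
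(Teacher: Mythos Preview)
Your proposal is correct and follows the same strategy the paper indicates: the paper's entire proof reads ``It is well known that this theorem follows from the duality between $H^1_{\nu}(\C)$ and ${\rm BMO}_{\nu}(\C)$ and the equivalence between ${\rm BMO}_{\nu}(\C)$ and the boundedness of the commutator, provided in Theorem~\ref{thm3}. We omit the details of this proof.'' Your sketch supplies exactly those omitted details --- the duality pairing $\langle \Pi(g,h),b\rangle = \langle [b,\mathcal B]g,h\rangle$ combined with \eqref{upper} for the easy inequality, and the Coifman--Rochberg--Weiss approximation/iteration scheme (displacing one factor far from the support of the atom so the kernel is nearly constant) for the constructive direction --- so there is no divergence in approach to discuss.
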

It is well known that this theorem follows from the duality between $H^1_{\nu}(\C)$ and ${\rm BMO}_{\nu}(\C)$ and the equivalence between ${\rm BMO}_{\nu}(\C)$ and the boundedness of the commutator, provided in Theorem \ref{thm3}.  We omit the details of this proof.

\bigskip
\bigskip

{\bf Acknowledgments:}
X. T. Duong and J. Li are supported by Australian Research Council DP 160100153.  B. D. Wick's research supported in part by National Science Foundation DMS grant \#1560955. 

\bigskip

\medskip

%Xuan Thinh Duong

\smallskip

Xuan Thinh Duong, Department of Mathematics, Macquarie University, NSW, 2109, Australia.

\smallskip

{\it E-mail}: \texttt{xuan.duong@mq.edu.au}

\vspace{0.3cm}

%%Ruming Gong
%
%%\smallskip
%
%School of Mathematical Sciences, Guangzhou University
%
%\smallskip
%
%{\it E-mail}: \texttt{gongruming@163.com }
%

%Ji Li

%\smallskip

Ji Li, Department of Mathematics, Macquarie University, NSW, 2109, Australia.

\smallskip

{\it E-mail}: \texttt{ji.li@mq.edu.au}

\vspace{0.3cm}

%%Marie Jose Saad
%
%
%Department of Mathematics, Washington University--St. Louis, St. Louis, MO 63130-4899 USA
%
%\smallskip
%
%{\it E-mail}: \texttt{mariejose@wustl.edu}
%
%\vspace{0.3cm}

%Brett D. Wick

%\smallskip

Brett D. Wick, Department of Mathematics, Washington University--St. Louis, St. Louis, MO 63130-4899 USA

\smallskip

{\it E-mail}: \texttt{wick@math.wustl.edu}

\vspace{0.3cm}

\end{document}